\newtheorem{theorem}{\bf Theorem}[section]
\newtheorem{proposition}{\bf Proposition}[section]
\newtheorem{lemma}{\bf Lemma}[section]
\newtheorem{definition}{\bf Definition}[section]
\newtheorem{remark}{\bf Remark}[section]
\newtheorem{notation}{\bf Notation}[section]
\newenvironment{proof}[1][Proof.]{\begin{trivlist} \item[\hskip \labelsep {\bfseries #1}]}{\end{trivlist}}
\title{Generic Simple Cocycles over Markov Maps}
\author{Mohammad Fanaee}
\date{}
\begin{document}

\maketitle

\begin{abstract}
Avila and Viana in [2] exhibit an explicit sufficient condition for the Lyapunov exponents of a linear cocycle over a Markov map to have multiplicity 1. Here, in terms of geometric perturbations, we prove that this sufficient criterion is generic in the space of all fiber bunched linear cocycles over Markov maps. Even more, the set of exceptional cocycles has infinite codimention, i.e. it is locally contained in finite unions of closed submanifolds with arbitrarily high codimension.\\
\end{abstract}

\setcounter{tocdepth}{1}

\tableofcontents

\section{Introduction} 

A linear cocycle over an invertible transformation $$f:N\rightarrow N$$  is a transformation $$F:N\times\mathbb C^d\rightarrow N\times\mathbb C^d$$ satisfying $f\circ \pi=\pi\circ F$ which acts by linear isomorphisms $A(x)$ on fibers. So, the cocycle has the form
$$F(x,v)=(f(x),A(x)v)$$
where
$$A:N\rightarrow\mathrm{GL}(d,\mathbb C).$$
Conversely, any $A:N\rightarrow\mathrm{GL}(d,\mathbb C)$ defines a linear cocycle over $f$. Note that $F^n(x,v)=(f^n(x),A^n(x)v)$, where
$$A^n(x)=A(f^{n-1}(x))~...~A(f(x))A(x),$$
$$A^{-n}(x)=(A^n(f^{-n}(x)))^{-1},$$
for any $n\geq 1$, and $A^0(x)=\mathrm{id}$.

Let $\mu$ be a probability measure invariant by $f$. Oseledets Theorem [10] states that there exist a Lyapunov splitting
 $$E_1(x)\oplus...\oplus E_k(x),~1\leq k=k(x)\leq d,$$
and Lyapunov exponents $$\lambda_1(x)>...>\lambda_k(x),$$
$$\lambda_i(x)=\lim_{|n|\rightarrow\infty}\frac{1}{n}\log\parallel A^n(x)v_i\parallel,~v_i\in E_i(x),~1\leq i\leq k,$$
at $\mu$-almost every point. Lyapunov exponents are invariant, uniquely defined at almost every $x$ and vary measurably with the base point $x$. Thus, Lyapunov exponents are constant when $\mu$ is ergodic and then $\{\lambda_1,...,\lambda_k\}$ is  called the Lyapunov spectrum of $A$.\\

One problem is to characterize when all exponents have multiplicity 1 meaning that the subspace $E_i$ of vectors $v_i\in\mathbb C^d$ that share the same value of $\lambda_i$ has dimension 1. 
Guivarc'h and Raugi [8], and Gol'dsheid and Margulis [7] have studied multiplicity 1 of Lyapunov exponents for independent random matrices.\\

There has been much recent progress on this problem, specially when the base dynamics is hyperbolic: Bonatti and Viana [6] obtained a general criterion for simplicity of Lyapunov spectrum for cocycles over shifts of finite type that satisfy the fiber bunching (domination) condition. This criterion has improved by Avila and Viana [2] for cocycles over any Markov structure, who used it to prove the Zorich-Kontsevich conjecture [3]. 

Indeed, [6] includes a proof of the genericity notion referring to a proof in [5] for genericity of non zero exponents.  This proof is based on sufficient criteria of Furstenberg for the existence of non-zero Lyapunov exponents for certain linear cocycles over hyperbolic transformations: non-existence of probability measures on the fibers invariant under the cocycle and under the holonomies of the stable and unstable foliations of the transformation.

Here, we prove the genericity of Avila and Viana simplicity criterion in [2] for linear cocycles over Markov maps, directly, by explicit geometric perturbations along periodic orbits and respective homoclinic orbits.

\subsection{Full countable shifts}
Suppose that $N=\mathbb N^{\mathbb Z}$, the full shift space with countably many symbols, and  $$f:N\rightarrow N$$ the shift map
$$f((x_n)_{n\in\mathbb Z})=(x_{n+1})_{n\in\mathbb Z}.$$
A cylinder of $N$ is any subset
$$[a_k,...;a_0;...,a_l]=\{x:~x_j=\iota_j,~j=k,...,l\}$$
of $N$.
We endowed $N$ with topology generated by cylinders. The local stable and local unstable sets of any $x\in N$ are defined as
$$W^s_{\mathrm{loc}}(x)=\{y:~x_n=y_n,~n\geq 0\}$$
and
$$W^u_{\mathrm{loc}}(x)=\{y:~x_n=y_n,~n<0\}.$$\\

Let $N_u=\mathbb N^{\{n\geq 0\}}$ and $N_s=\mathbb N^{\{n<0\}}$. The map
$$x\mapsto(x_s,x_u)$$
is a homeomorphism form $N$ onto $N_s\times N_u$ where $x_s=\pi_s(x)$ and $x_u=\pi_u(x)$, for natural projections $\pi_s:N\rightarrow N_s$ and $\pi_u:N\rightarrow N_u$. We also consider the maps $f_s:N_s\rightarrow N_s$ and $f_u:N_u\rightarrow N_u$ defined by
$$f_u\circ\pi_u=\pi_u\circ f,$$
$$f_s\circ\pi_s=\pi_s\circ f^{-1},$$
respectively.\\

Assume that $\mu_f$ is an ergodic probability measure for $f$. Let $\mu_s=(\pi_s)_*\mu_f$ and $\mu_u=(\pi_u)_*\mu_f$ be the images of $\mu_f$ under the natural projections. It is easy to see that $\mu_s$ and $\mu_u$ are ergodic probabilities for $f_s$ and $f_u$, respectively. Notice that $\mu_s$ and $\mu_u$ are positive on cylinders, by definition.\\

We say that $\mu_f$ has product structure if there exists a measurable density function $\omega:N\rightarrow(0,+\infty)$ such that
$$\mu_f=\omega(x)(\mu_s\times\mu_u).$$

\subsection{Fiber bunching condition}
Assume that $N$ is endowed with a metric d for which (i) $\mathrm d(f(y),f(z))\leq\theta(x)~\mathrm d(y,z)$, for all $y,z\in W^s_{\mathrm{loc}}(x)$, (ii) $\mathrm d(f^{-1}(y),f^{-1}(z))\leq\theta(x)~\mathrm d(y,z)$, for all $y,z\in W^u_{\mathrm{loc}}(x)$,
where $0<\theta(x)\leq\theta<1$, for all $x\in N$.\\

Let $A$ be an  $\eta$-H\"{o}lder continuous linear cocycle over $f$.

\begin{definition}
$A$ is fiber bunched if there exists some constant $\tau\in(0,1)$ such that
$$||A(x)||~||A(x)^{-1}||~\theta(x)^{\eta}<\tau,$$
for any $x\in N$.
\end{definition}

\begin{remark}
Fiber bunching is an open condition in $C^{r,\rho}(N,d,\mathbb C)$: if $A$ is a fiber bunched linear cocycle then any linear cocycle $B$ sufficiently $C^0$ close to $A$ is also fiber bunched, by definition.
\end{remark}

Our main result is\\
\textbf{Main Theorem.} Generic fiber bunched linear cocycles over the full shift map have simple Lyapunov spectrum. Even more, the set of exceptional cocycles has infinite codimention. \\

This implies the following more general cases\\
\textbf{Corollary 1.} Generic fiber bunched linear cocycles over any shift map have simple Lyapunov spectrum: the set of exceptional cocycles has infinite codimention.\\\\
\textbf{Corollary 2.} Generic fiber bunched linear cocycles over any Markov map have simple Lyapunov spectrum: the set of exceptional cocycles has infinite codimention.

\section{Holonomy Maps}
In this section, we study the existence, continuity and differentiability of holonomy maps as transformations over stable and unstable sets.

\begin{notation}
Set
$$\theta^n(x)=\theta(f^{n-1}(x))~...~\theta(x),~n\geq1.$$
for any function $\theta:N\rightarrow\mathbb C$.
\end{notation}

The next lemma is a key lemma for existence of stable and unstable holonomy maps.

\begin{lemma}
If $A$ is fiber bunched then there exists some constant $C>0$ such that
$$\parallel A^n(y)\parallel\parallel A^n(z)^{-1}\parallel\theta^n(x)^\eta\leq C\tau^n,$$
for any $y,z\in W^s_{\mathrm{loc}}(x)$, and all $n\geq 1$.
\end{lemma}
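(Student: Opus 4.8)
The plan is to control the product $\parallel A^n(y)\parallel\parallel A^n(z)^{-1}\parallel\theta^n(x)^\eta$ by a submultiplicative/telescoping argument. First I would expand $A^n(y)=A(f^{n-1}(y))\cdots A(y)$ and $A^n(z)^{-1}=A(z)^{-1}\cdots A(f^{n-1}(z))^{-1}$, and likewise $\theta^n(x)^\eta=\theta(f^{n-1}(x))^\eta\cdots\theta(x)^\eta$. The naive bound $\parallel A^n(y)\parallel\leq\prod_{j=0}^{n-1}\parallel A(f^j(y))\parallel$ is too lossy because it pairs $\parallel A(f^j(y))\parallel$ with $\parallel A(f^j(z))^{-1}\parallel$ at the wrong scale; instead I would pair each factor with the corresponding $\theta$-factor so that the fiber bunching inequality $\parallel A(w)\parallel\,\parallel A(w)^{-1}\parallel\,\theta(w)^\eta<\tau$ applies at the point $w=f^j(x)$. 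So the key is to compare $\parallel A(f^j(y))\parallel$ with $\parallel A(f^j(x))\parallel$ and $\parallel A(f^j(z))^{-1}\parallel$ with $\parallel A(f^j(x))^{-1}\parallel$, using that $f^j(y),f^j(z)$ stay in a local stable set through $f^j(x)$ and converge to it geometrically.

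Concretely, since $y,z\in W^s_{\mathrm{loc}}(x)$, hypothesis (i) gives $\mathrm d(f^j(y),f^j(x))\leq\theta^j(x)\,\mathrm d(y,x)\leq\theta^j\,\mathrm{diam}$, and similarly for $z$; here $\theta<1$ so these distances decay exponentially. By $\eta$-Hölder continuity of $A$ (and of $A^{-1}$, which follows since $\mathrm{GL}(d,\mathbb C)\ni M\mapsto M^{-1}$ is locally Lipschitz and $\parallel A\parallel,\parallel A^{-1}\parallel$ are bounded on the compact-like pieces we use — more precisely one uses $\parallel A(w)^{-1}-A(w')^{-1}\parallel\leq\parallel A(w)^{-1}\parallel\,\parallel A(w')^{-1}\parallel\,\parallel A(w)-A(w')\parallel$), we get
$$\parallel A(f^j(y))\parallel\leq\parallel A(f^j(x))\parallel\bigl(1+c_j\bigr),\qquad \parallel A(f^j(z))^{-1}\parallel\leq\parallel A(f^j(x))^{-1}\parallel\bigl(1+c_j'\bigr),$$
where $c_j,c_j'$ are bounded by a constant times $\theta^{j\eta}$. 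Multiplying over $j=0,\dots,n-1$ and grouping,
$$\parallel A^n(y)\parallel\,\parallel A^n(z)^{-1}\parallel\,\theta^n(x)^\eta\;\leq\;\prod_{j=0}^{n-1}\Bigl(\parallel A(f^j(x))\parallel\,\parallel A(f^j(x))^{-1}\parallel\,\theta(f^j(x))^\eta\Bigr)\prod_{j=0}^{n-1}(1+c_j)(1+c_j')\;<\;\tau^n\prod_{j\geq0}(1+c_j)(1+c_j').$$
The infinite product $\prod_{j\geq0}(1+c_j)(1+c_j')$ converges because $\sum_j(c_j+c_j')<\infty$ (geometric series in $\theta^\eta$), and I would call its value $C$, which depends only on $A$, $\eta$, $\theta$, $\tau$ and $\mathrm{diam}(N)$ — not on $x,y,z,n$. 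That yields exactly $\parallel A^n(y)\parallel\,\parallel A^n(z)^{-1}\parallel\,\theta^n(x)^\eta\leq C\tau^n$.

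The main obstacle I anticipate is making the pointwise comparison estimates uniform: the bound $\parallel A(w)^{-1}-A(w')^{-1}\parallel\leq\parallel A(w)^{-1}\parallel\,\parallel A(w')^{-1}\parallel\,\parallel A(w)-A(w')\parallel$ involves $\parallel A(\cdot)^{-1}\parallel$ at varying points, and to absorb everything into a single constant I need a uniform upper bound on $\parallel A\parallel$ and $\parallel A^{-1}\parallel$ along orbits in local stable sets. In the full-shift setting with an $\eta$-Hölder cocycle this is where one must be a little careful — Hölder continuity is with respect to the metric $\mathrm d$, and since any $f^j(y)$ lies within distance $\theta^j\,\mathrm{diam}$ of $f^j(x)$, the values $\parallel A(f^j(y))\parallel$ differ from $\parallel A(f^j(x))\parallel$ by at most an additive $\eta$-Hölder amount, so they are comparable up to a factor $(1+c_j)$ as claimed; one should also note $\prod_{j}(1+c_j)$ being finite forces the ratios $\parallel A(f^j(y))\parallel/\parallel A(f^j(x))\parallel$ to stay in a bounded range, closing the loop. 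Once this uniformity is pinned down, the rest is the telescoping bookkeeping above.
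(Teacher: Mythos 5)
Your proposal follows essentially the same route as the paper: both start from the submultiplicative bound $\parallel A^n(y)\parallel\parallel A^n(z)^{-1}\parallel\leq\prod_j\parallel A(f^j(y))\parallel\,\parallel A(f^j(z))^{-1}\parallel$, compare each factor with the corresponding factor along the $x$-orbit via H\"older continuity and the contraction $\mathrm d(f^j(y),f^j(x))\leq\theta^j(x)\,\mathrm d(x,y)$, apply fiber bunching pointwise at $f^j(x)$ to get the $\tau^n$, and absorb the accumulated correction factors into a convergent constant $C$ (the paper writes them as $\exp(C_1\sum_j\theta^j(x)^\eta(\cdots))$ rather than your $\prod_j(1+c_j)(1+c_j')$, but this is only a cosmetic difference). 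Your side remark about needing a uniform lower bound on $\parallel A\parallel$ to convert the additive H\"older estimate into the multiplicative comparison is a point the paper implicitly assumes in invoking the ratio bound $\parallel A(f^j(y))\parallel/\parallel A(f^j(x))\parallel\leq\exp(C_1\,\mathrm d(f^j(x),f^j(y))^\eta)$, so you are, if anything, slightly more careful on that point.
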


\begin{proof}
Submultiplicativity of norms implies that
$$\parallel A^n(y)\parallel\parallel A^n(z)^{-1}\parallel\leq\prod_{j=0}^{n-1}\parallel A(f^j(y))\parallel~\parallel A(f^j(z))^{-1}\parallel.$$
By regularity of cocycle $A$, there is $C_1>0$ such that
$$\parallel A(f^j(y))\parallel/\parallel A(f^j(x))\parallel\leq \exp(C_1\mathrm{d}(f^j(x),f^j(y))^\eta)\leq\exp(C_1\theta^j(x)^\eta\mathrm{d}(x,y)^\eta).$$

It is similar for $\parallel A(f^j(z))^{-1}\parallel/\parallel A(f^j(x))^{-1}\parallel$. So, the right hand side in lemma is bounded above by
$$\exp[C_1\sum_{j=0}^{n-1}\theta^j(x)^\eta(\mathrm{d}(x,y)^\eta+\mathrm{d}(x,z)^\eta)]\prod_{j=0}^{n-1}\parallel A(f^j(x))\parallel~\parallel A(f^j(x))^{-1}\parallel\theta^{n\eta}.$$

Since $\theta(x)<\theta<1$, the first factor is bounded by some uniform constant $C>0$, and fiber bunching implies that the second one is bounded by $\tau^n$.

The proof of Lemma 2.1 is now completed.
\end{proof}

\subsection{Existence of holohomies}
Set $H^n_{x,y}=A^n(y)^{-1}A^n(x)$.

\begin{definition}
A cocycle $A$ admits s-holonomy if
$$H^s_{x,y}=\lim_{n\rightarrow+\infty}H^n_{x,y}$$
exists for any pair of points $x,y$ in the same local stable set. u-holonomy is defined in a similar way, when $n\rightarrow-\infty$, for pairs of points in the same local unstable set.
\end{definition}

\begin{proposition}
If $A$ is fiber bunched then, for all $x$ and any $y\in W^s_{\mathrm{loc}}(x)$, s-holonomy $H^s_{x,y}$ exists, where
\\(a) $H^s_{x,y}=H^s_{z,y}.H^s_{x,z}$, for any $z\in W^s_{\mathrm{loc}}(x)$, and $H^s_{y,x}.H^s_{x,y}=\mathrm{id}$,
\\(b)$H^s_{f^j(x),f^j(y)}=A^j(y)\circ H^s_{x,y}\circ A^j(x)^{-1}$, for all $j\geq1$.
\end{proposition}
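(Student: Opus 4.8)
The plan is to establish existence of the limit $H^s_{x,y}=\lim_{n\to+\infty}H^n_{x,y}$ by showing the sequence $(H^n_{x,y})_{n\geq 1}$ is Cauchy, and then to deduce the algebraic identities (a) and (b) by passing to the limit in their finite-$n$ counterparts. For the Cauchy estimate, I would write the telescoping difference
\[
H^{n+1}_{x,y}-H^n_{x,y}=A^{n+1}(y)^{-1}A^{n+1}(x)-A^n(y)^{-1}A^n(x),
\]
and factor out $A^n(y)^{-1}$ on the left and $A^n(x)$ on the right. Using the cocycle relation $A^{n+1}(w)=A(f^n(w))A^n(w)$, the bracketed middle term becomes $A^n(y)\bigl(A(f^n(y))^{-1}A(f^n(x))-\mathrm{id}\bigr)A^n(x)^{-1}$ after the appropriate rearrangement, so that
\[
\|H^{n+1}_{x,y}-H^n_{x,y}\|\leq \|A^n(y)^{-1}\|\,\|A^n(x)\|\,\bigl\|A(f^n(y))^{-1}A(f^n(x))-\mathrm{id}\bigr\|.
\]
The last factor is bounded by $C_2\,\mathrm d(f^n(x),f^n(y))^\eta\leq C_2\,\theta^n(x)^\eta\,\mathrm d(x,y)^\eta$ by Hölder continuity of $A$ together with hypothesis (i) on the metric, while the product $\|A^n(y)^{-1}\|\,\|A^n(x)\|\,\theta^n(x)^\eta$ is exactly what Lemma 2.1 controls (after renaming $y\leftrightarrow z$), giving a bound $\leq C\tau^n$. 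Hence $\|H^{n+1}_{x,y}-H^n_{x,y}\|\leq C'\tau^n\,\mathrm d(x,y)^\eta$, which is summable since $\tau<1$; therefore the sequence converges, and moreover the bound $\|H^s_{x,y}-\mathrm{id}\|\leq (\text{const})\,\mathrm d(x,y)^\eta$ follows by summing from $n=0$ (noting $H^0_{x,y}=\mathrm{id}$).

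For part (b), the finite-level identity is a direct computation: from $A^{n+j}(w)=A^n(f^j(w))A^j(w)$ one gets
\[
H^n_{f^j(x),f^j(y)}=A^n(f^j(y))^{-1}A^n(f^j(x))=A^j(y)\,A^{n+j}(y)^{-1}A^{n+j}(x)\,A^j(x)^{-1}=A^j(y)\,H^{n+j}_{x,y}\,A^j(x)^{-1},
\]
and letting $n\to+\infty$ (the limit on the right exists by the first part, and $A^j$ are fixed invertible matrices) yields (b). For part (a), I would similarly start from the obvious identity $H^n_{z,y}H^n_{x,z}=A^n(y)^{-1}A^n(z)A^n(z)^{-1}A^n(x)=A^n(y)^{-1}A^n(x)=H^n_{x,y}$ valid for every $n$, and pass to the limit; the relation $H^s_{y,x}H^s_{x,y}=\mathrm{id}$ is the special case $z=x$ (using $H^s_{x,x}=\mathrm{id}$). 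Note that $z\in W^s_{\mathrm{loc}}(x)$ also lies in $W^s_{\mathrm{loc}}(y)$, so all the holonomies invoked are defined.

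The main technical point — really the only place any care is needed — is the factorization step that exhibits $H^{n+1}_{x,y}-H^n_{x,y}$ with the factors $A^n(y)^{-1}$, $A^n(x)$ pulled to the outside, so that Lemma 2.1 applies cleanly; once that algebraic rearrangement is set up correctly, everything else is either the already-proved lemma, Hölder continuity of $A$, the contraction hypothesis on the metric along local stable sets, or routine passage to the limit. The u-holonomy statement is entirely symmetric, replacing $f$ by $f^{-1}$, $W^s_{\mathrm{loc}}$ by $W^u_{\mathrm{loc}}$, and $n\to+\infty$ by $n\to-\infty$, using hypothesis (ii) and the version of Lemma 2.1 for local unstable sets, so it needs no separate argument.
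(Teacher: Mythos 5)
Your proposal takes essentially the same route as the paper: the telescoping factorization with $A^n(y)^{-1}$ and $A^n(x)$ pulled to the outside, the H\"older bound on the middle factor, Lemma 2.1 on the outer product, summability since $\tau<1$, and passage to the limit in the finite-$n$ identities for (a) and (b) (the paper's displayed equation (2) is exactly your identity for (b)). Two small slips in your write-up that do not affect correctness: the ``bracketed middle term'' is simply $A(f^n(y))^{-1}A(f^n(x))-\mathrm{id}$, not the expression with extra $A^n(y)$, $A^n(x)^{-1}$ factors you wrote (those would cancel the outer $A^n(y)^{-1}$, $A^n(x)$), though your displayed inequality is nonetheless the right one; and $H^s_{y,x}H^s_{x,y}=\mathrm{id}$ is the composition identity with both endpoints equal to $x$ and intermediate point $y$ (equivalently a direct consequence of $H^n_{y,x}H^n_{x,y}=\mathrm{id}$ for all $n$), not the case $z=x$.
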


\begin{proof}
We have
$$\parallel H^{n+1}_{x,y}-H^n_{x,y}\parallel\leq\parallel A^n(x)^{-1}\parallel~\parallel A(f^n(x))^{-1}A(f^n(y))-\mathrm{id}\parallel~\parallel A^n(y)\parallel.$$

By continuity of $A$, there is $C_2>0$ such that the middle factor is bounded by
$$C_2\mathrm{d}(f^n(x),f^n(y))^\eta\leq C_2\theta^n(x)^\eta\mathrm{d}(x,y)^\eta,$$
and hence, by the last lemma
\begin{eqnarray}
\parallel H^{n+1}_{x,y}-H^n_{x,y}\parallel\leq CC_2\tau^n\mathrm{d}(x,y)^\eta.
\end{eqnarray}

As $\tau<1$, this implies that $H_n(x,y)$ is a Cauchy sequence, uniformly on $x,y$, and therefore, it is uniformly convergent. This proves the first part of proposition. (a) follows immediately from definition, and
\begin{eqnarray}
A^n(f^j(y))^{-1}A^n(f^j(x))=A^{j}(y)A^{n+j}(y)^{-1}A^{n+j}(x)A^j(x)^{-1}
\end{eqnarray}
proves (b).

The proof of Proposition 2.1 is now completed.
\end{proof}

\begin{remark}
As fiber bunching is an open condition, the constants in Lemma 2.1
and Proposition 2.1 may be taken uniform on some neighborhood $\mathcal U$ of $A$ when $A$ is fiber bunched.
\end{remark}

Note that the s-holonomies $H^s_{x,y}$ vary continuously on $(x,y)$ in the sense that the map
$$(x,y)\rightarrow H^s_{x,y}$$
is continuous on $W^s_n=\{(x,y):~f^n(y)\in W^s_{\mathrm{loc}}(x)\}$, for every $n\geq 0$. It is, in fact, a direct consequence of the uniform limit on (1) when $(x,y)\in W^s_0$, for instance. The general case $n>0$ follows immediately, by (b) of the last proposition .

Indeed, as the constants $C,\bar C$ may be taken uniformly on $\mathcal U$, the Cauchy estimate in (1) is also locally uniform on $A$. Therefore, one may consider this notion of dependence:
$$(A,x,y)\rightarrow H^s_{A,x,y}$$
is continuous on $\mathcal U\times W^s_n$, for all $n\geq 0$.

\subsection{Differentiability of holonomies}
We notice that the space of all H\'older continuous cocycles is a Banach space and so the tangent space at each point $A$ is naturally identified with this Banach space.

\begin{proposition}
If $A$ is fiber bunched then the map
$$B\mapsto H^s_{B,x,y}$$
is of class $C^1$ on $\mathcal U$, for any $y\in W^s_{\mathrm{loc}}(x)$, and
$$\partial_BH^s_{B,x,y}(\dot B)=\sum_{i=0}^{+\infty}B^i(y)^{-1}[H^s_{B,f^i(x),f^i(y)}B(f^i(x))^{-1}\dot B(f^i(x))-$$
$$\hspace{4cm}B(f^i(y))^{-1}\dot B(f^i(y)))H^s_{B,f^i(x),f^i(y)}]B^i(x).$$
\end{proposition}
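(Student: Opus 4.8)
The plan is to establish the $C^1$ statement by exhibiting the candidate derivative, verifying that the displayed series converges uniformly (locally in $B$), and then confirming it really is the Fréchet derivative by a standard telescoping/mean-value estimate. First I would recall from Proposition 2.1(b) that $H^s_{B,x,y}=\lim_{n\to\infty}H^n_{B,x,y}$ with $H^n_{B,x,y}=A^n(y)^{-1}A^n(x)$ (writing $A=B$), and rewrite the partial sums telescopically: $H^{n}_{B,x,y}=\sum_{i=0}^{n-1}\bigl(H^{i+1}_{B,x,y}-H^{i}_{B,x,y}\bigr)+\mathrm{id}$. Each increment can be reorganized, using identity (2) from the proof of Proposition 2.1, as $B^i(y)^{-1}\bigl(H^{n-i}_{B,f^i(x),f^i(y)}-\mathrm{id}\bigr)$-type expressions conjugated back by $B^i(x)$; differentiating the finite composition $B^n(y)^{-1}B^n(x)$ with respect to $B$ by the product rule produces exactly the finite-sum analogue of the claimed formula, where the ``tail'' holonomy $H^{n-i}$ is in the limit replaced by $H^s_{B,f^i(x),f^i(y)}$.

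Next I would prove the series converges. The $i$-th term is bounded, up to a uniform constant, by $\|B^i(y)^{-1}\|\,\|H^s_{B,f^i(x),f^i(y)}\|\,\|B(f^i(x))^{-1}\|\,\|\dot B\|_\eta\,\|B^i(x)\|$ (plus the symmetric term). Here the key input is Lemma 2.1 applied along the stable set: $\|B^i(y)^{-1}\|\,\|B^i(x)\|$ is essentially $\|B^i(x)\|\,\|B^i(x)^{-1}\|$ up to the Hölder distortion factor, and fiber bunching forces $\|B^i(x)\|\,\|B^i(x)^{-1}\|\,\theta^i(x)^\eta\le C\tau^i$; since $\mathrm d(f^i(x),f^i(y))\le\theta^i(x)^\eta\mathrm d(x,y)^\eta$ and $\|H^s_{B,f^i(x),f^i(y)}-\mathrm{id}\|$ is controlled by this same small quantity (estimate (1)), the $H^s$ factor is uniformly bounded, and the single factor $\|B(f^i(x))^{-1}\|$ is absorbed because the product $\|B^i(y)^{-1}\|\,\|B^{i}(x)\|\le\|B^{i+1}(x)\|\,\|B^{i+1}(x)^{-1}\|\cdot(\text{distortion})\cdot\|B(f^i(x))\|^{-1}\|B(f^i(y))\|$ up to reindexing — i.e. one writes $B^i(y)^{-1}B(f^i(y))^{-1}=B^{i+1}(y)^{-1}$ and similarly on the $x$ side, so each summand is genuinely comparable to $\|B^{i+1}(x)\|\|B^{i+1}(x)^{-1}\|\theta^{i+1}(x)^\eta\le C\tau^{i+1}$. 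Hence the series is dominated by $C'\sum_i\tau^i\|\dot B\|_\eta<\infty$, uniformly for $B$ in the neighborhood $\mathcal U$ (Remark 2.3), and it defines a bounded linear operator in $\dot B$ depending continuously on $B$ (continuity of $H^s_{B,\cdot,\cdot}$ in $B$ from the last part of §2.1, together with uniform tail control).

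Finally I would verify Fréchet differentiability. Since $H^s_{B,x,y}$ is a uniform limit of the $C^1$ maps $B\mapsto H^n_{B,x,y}=B^n(y)^{-1}B^n(x)$ (each a composition of finitely many coordinate evaluations and inversion, hence $C^1$ on $\mathcal U$), it suffices to show the derivatives $\partial_B H^n_{B,x,y}$ converge uniformly on $\mathcal U$; by the standard theorem on term-by-term differentiation of uniformly convergent sequences, the limit $H^s_{B,x,y}$ is then $C^1$ with derivative the limit of $\partial_B H^n_{B,x,y}$, which is precisely the displayed series (the replacement of the finite ``tail'' holonomies $H^{n-i}_{B,f^i(x),f^i(y)}$ by $H^s_{B,f^i(x),f^i(y)}$ in the limit is justified by their uniform convergence, already controlled by (1)). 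The uniform convergence of $\partial_B H^n$ is exactly the domination $\sum_i C'\tau^i\|\dot B\|_\eta$ established above, with a tail that is uniformly small in $B$. The main obstacle, and where care is needed, is the bookkeeping in the second step: one must combine Lemma 2.1 (which gives decay only for the product of norms of $B^i$ along a stable pair, not for the individual mixed terms $B^i(y)^{-1}\dot B(f^i(y)) H^s B^i(x)$) with the absorption of the stray single factors $B(f^i(\cdot))^{\pm1}$ into a shifted index, and simultaneously use the Hölder-distortion bound on $H^s_{B,f^i(x),f^i(y)}$ to see it stays bounded; getting a clean geometric bound $C\tau^i$ out of this, uniformly over $\mathcal U$ and over $(x,y)$ in a fixed $W^s_m$, is the delicate part, though it is entirely parallel to the estimates already carried out in Lemma 2.1 and Proposition 2.1.
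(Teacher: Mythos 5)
Your overall strategy matches the paper's: differentiate the finite approximations $H^n_{B,x,y}=B^n(y)^{-1}B^n(x)$, identify the finite-sum analogue of the claimed formula, and pass to the limit by proving the derivatives converge uniformly so that the standard term-by-term differentiation theorem applies. The structure is right, but the estimate you sketch for summability of the series — the core of the argument — has a genuine gap.

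You bound the $i$-th summand by estimating the two halves of the bracket separately, getting roughly $\|B^i(y)^{-1}\|\,\|H^s\|\,\|B(f^i(\cdot))^{-1}\|\,\|\dot B\|_{0,\eta}\,\|B^i(x)\|$ plus the symmetric term, and then (after a reindexing to absorb the stray $\|B(f^i(\cdot))^{-1}\|$) you invoke Lemma~2.1 to claim a $C\tau^i$ bound. But Lemma~2.1 gives $\|B^i(y)^{-1}\|\,\|B^i(x)\|\,\theta^i(x)^\eta\le C\tau^i$, i.e.\ $\|B^i(y)^{-1}\|\,\|B^i(x)\|\le C\tau^i/\theta^i(x)^\eta$, and fiber bunching forces $\theta(x)^\eta<\tau$ pointwise (since $\|A\|\,\|A^{-1}\|\ge1$), hence $\tau^i/\theta^i(x)^\eta>1$ for every $i$. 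So, as written, your per-term bound is bounded away from zero and does not yield a convergent series.

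What is missing is the cancellation inside the bracket, which is where the needed factor $\theta^i(x)^\eta\,\mathrm d(x,y)^\eta$ actually comes from. The paper rewrites
$$H^s\,B(f^i(x))^{-1}\dot B(f^i(x))-B(f^i(y))^{-1}\dot B(f^i(y))\,H^s$$
as
$$(H^s-\mathrm{Id})\,B(f^i(x))^{-1}\dot B(f^i(x))+B(f^i(y))^{-1}\dot B(f^i(y))\,(\mathrm{Id}-H^s)$$
$$+\bigl[B(f^i(x))^{-1}\dot B(f^i(x))-B(f^i(y))^{-1}\dot B(f^i(y))\bigr],$$
and then each piece is small of order $\mathrm d(f^i(x),f^i(y))^\eta\le\theta^i(x)^\eta\mathrm d(x,y)^\eta$: the first two because $\|H^s_{B,f^i(x),f^i(y)}-\mathrm{Id}\|\lesssim\mathrm d(f^i(x),f^i(y))^\eta$, the third by H\"older continuity of $B^{-1}$ and $\dot B$. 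That supplies exactly the factor $\theta^i(x)^\eta$ that, combined with Lemma~2.1, turns the $i$-th term into $C\tau^i\mathrm d(x,y)^\eta\|\dot B\|_{0,\eta}$. You observe that $\|H^s-\mathrm{Id}\|$ is small (estimate (1)) but then use this only to conclude that $\|H^s\|$ is bounded, which throws away the crucial smallness. The same issue recurs in your verification that $\partial_B H^n$ converges uniformly: replacing $H^{n-i}$ by $H^s$ in the $i$-th term needs $\|H^s_{B,f^i(x),f^i(y)}-H^{n-i}_{B,f^i(x),f^i(y)}\|\lesssim\tau_0^{\,n-i}\theta^i(x)^\eta\mathrm d(x,y)^\eta$, again with the factor $\theta^i(x)^\eta$ doing the work.
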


\begin{proof}
First, we show that the expression of $\partial_BH^s_{B,x,p}$ is well-defined. Let $i\geq 0$.
\begin{eqnarray}
H^s_{B,f^i(x),f^i(y)}B(f^i(x))^{-1}\dot B(f^i(x))-B(f^i(y))^{-1}\dot B(f^i(y))H^s_{B,f^i(x),f^i(y)}
\end{eqnarray}
may be written as
$$(H^s_{B,f^i(x),f^i(y)}-\mathrm{Id})B(f^i(x))^{-1}\dot B(f^i(x))+B(f^i(y))^{-1}\dot B(f^i(y))(\mathrm{Id}-H^s_{B,f^i(x),f^i(y)})$$
$$+B(f^i(x))^{-1}\dot B(f^i(x))-B(f^i(y))^{-1}\dot B(f^i(y)).$$

By the last proposition, there is some uniform $\bar C>o$ such that the first term is bounded by
$$\bar C\mathrm{d}(f^i(x),f^i(y))^\eta\parallel B(f^i(x))^{-1}\parallel~\parallel\dot B(f^i(x))\parallel.$$
It is the same for second term. The third one is equal to
$$B(f^i(x))^{-1}[\dot B(f^i(x))-\dot B(f^i(y))]+[B(f^i(x))^{-1}-B(f^i(y))^{-1}]\dot B(f^i(y)),$$
and since $B^{-1}$ and $\dot B$ are H\"older continuous, using (2), it is bounded by
$$(||B^{-1}||_{0,0}\eta(\dot B)+\eta(B^{-1})||\dot B||_{0,0})\mathrm{d}(f^i(x),f^i(y))^\eta\leq\parallel B^{-1}\parallel_{0,\eta}\parallel\dot B\parallel_{0,\eta}\mathrm{d}(f^i(x),f^i(y))^\eta.$$

Hence (3) is bounded by
$$(2\bar C+1)C_3\parallel\dot B\parallel_{0,\eta}\mathrm{d}(f^i(x),f^i(y))^\eta\leq(2\bar C+1)C_3\parallel\dot B\parallel_{0,\eta}\theta^i(x)^\eta\mathrm{d}(x,y)^\eta$$
where $C_3=\sup\{\parallel B^{-1}\parallel_{0,\eta},~B\in\mathcal U\}$. So, the $i$th term in the expression of $\partial_Bh^s_{B,y,z}(\dot B)$ is bounded by
\begin{eqnarray}
C_4\parallel\dot B\parallel_{0,\eta}\theta ^i(x)^\eta\mathrm{d}(x,y)^\eta||B^i(p)^{-1}||||B^i(x)||\leq C_4\tau^{i}\mathrm{d}(x,y)^\eta\parallel\dot B\parallel_{0,\eta},
\end{eqnarray}
by fiber bunching hypothesis where $C_4=(2\bar C+1)C_3$. Therefore, as $\tau<1$, the series (3) does converge, uniformly.\\

Now, we should derivate $H^s_{B,x,y}$. By definition, $H^s_{B,x,y}$ is the uniform limit of $H^n_{B,x,y}=B^n(y)^{-1}B^n(x)$ when $n\rightarrow\infty$. Indeed, $H^n_{B,x,y}$ is a differentiable function of $B$ with derivative $\partial_BH^n_{B,x,y}(\dot B)$ equal to
$$\sum_{i=0}^{n-1}B^i(y)^{-1}[H^{n-i}_{B,f^i(x),f^i(y)}B(f^i(x))^{-1}\dot B(f^i(x))-B(f^i(y))^{-1}\dot B(f^i(y))H^{n-i}_{B,f^i(x),f^i(y)}]B^i(x),$$
for all $\dot B$ in tangent bundle and any $n\geq 1$.

It suffices to show that $\partial_BH^n_{B,x,y}$ converges uniformly to $\partial_BH^s_{B,x,y}$ as $n\rightarrow\infty$. By (2), for any $\tau_0\in(\tau,1)$,
$$\parallel H^s_{B,x,y}-H^n_{B,x,y}\parallel\leq CC_2\sum_{i=n}^{\infty}\tau^{i}\mathrm{d}(x,y)^\eta$$
which is bounded by
$$C_5\tau^{n}\mathrm{d}(x,y)^\eta\leq C_5\tau_0^n\mathrm{d}(x,y)^\eta,$$
for some uniform constant $C_5>0$. Then, for all $0\leq i\leq n$,
$$\parallel H^s_{B,f^i(x),f^i(y)}-H^{n-i}_{B,f^i(x),f^i(y)}\parallel\leq
C_5\tau_0^{(n-i)}\mathrm{d}(f^i(x),f^i(y))^\eta$$
bounded by
$$C_5\tau_0^{(n-i)}\theta^i(x)^\eta\mathrm{d}(x,y)^\eta.$$

It follows, by Lemma 2.1, that the difference between the $i$th terms in the expressions of $\partial_BH^s_{B,x,y}$ and $\partial_BH^n_{B,x,y}$ is bounded by
$$2C_3C_5\tau_0^{n-i}\theta^i(x)^\eta\mathrm{d}(x,y)^\eta||B^i(y)^{-1}||||B^i(x)||\leq 2C_3C_5\tau_0^{n-i}\tau^i\mathrm{d}(x,y)^\eta.$$
Combining with , $\parallel\partial_BH^s_{B,x,y}-\partial_BH^n_{B,x,y}\parallel$ is bounded by
$$\{2C_3C_5\tau_0^n\sum_{i=0}^{n-1}(\tau_0^{-1}\tau)^i+C_4\sum_{i=n}^{+\infty} \tau^{i}\}\mathrm{d}(x,y)^\eta\parallel\dot B\parallel_{0,\eta}.$$

Since $\tau,\tau_0$ and $(\tau_0^{-1}\tau)$ are strictly less that 1, therefore the series tends uniformly to $0$ as $n\rightarrow\infty$.

The proof of Proposition 2.2 is now completed.\\
\end{proof}

There exist dual expressions of last results for unstable holonomies, for points in $W^u_{\mathrm{loc}}(x)$. The dual of the last proposition is the following.

\begin{proposition}
If $A$ is fiber bunched then
$$\mathcal U\ni B\mapsto H^u_{B,x,y}$$
is of class $C^1$, and, for any $y\in W^u_{\mathrm{loc}}(x)$,
$$\partial_BH^u_{B,x,y}(\dot B)=-\sum_{i=1}^{+\infty}B^{-i}(y)^{-1}[H^u_{B,f^{-i}(x),f^{-i}(y)}B(f^{-i}(x))^{-1}\dot B(f^{-i}(x))$$
$$\hspace{4cm}-B(f^{-i}(y))^{-1}\dot B(f^{-i}(y))H^u_{B,f^{-i}(x),f^{-i}(y)}]B^{-i}(x).$$
\end{proposition}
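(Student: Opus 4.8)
The plan is to prove Proposition 2.3 by running the arguments of Lemma 2.1, Proposition 2.1 and Proposition 2.2 in reverse time, i.e. with $\mathbb N$ replaced by $-\mathbb N$ and hypothesis (i) replaced by hypothesis (ii). Conceptually this amounts to applying those results to the inverse system --- the cocycle $x\mapsto A(f^{-1}(x))^{-1}$ over $f^{-1}$, whose $n$-th iterate is $A^{-n}$ and for which the local unstable sets of $f$ are the local stable sets --- so no new idea is needed; I would nonetheless carry out the adaptation directly, which also sidesteps the index mismatch between the $\theta$- and $A$-factors that appears if one literally renames the dynamics.

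The first step is the dual of Lemma 2.1: for $y,z\in W^u_{\mathrm{loc}}(x)$ and all $n\ge1$,
$$\parallel A^{-n}(y)\parallel\,\parallel A^{-n}(z)^{-1}\parallel\,\theta^{-n}(x)^\eta\le C\tau^n,\qquad \theta^{-n}(x):=\theta(f^{-1}(x))\cdots\theta(f^{-n}(x)).$$
This follows exactly as in Lemma 2.1: iterating hypothesis (ii) gives $\mathrm d(f^{-j}(y),f^{-j}(z))\le\mathrm{const}\cdot\theta^j\,\mathrm d(y,z)$, so the H\"older factor $\exp(C_1\sum_j\cdots)$ stays bounded, while pointwise fiber bunching applied at $f^{-1}(x),\dots,f^{-n}(x)$ bounds $\prod_{j=1}^n\parallel A(f^{-j}(x))\parallel\,\parallel A(f^{-j}(x))^{-1}\parallel\,\theta(f^{-j}(x))^\eta$ by $\tau^n$.

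With this estimate, the dual of Proposition 2.1 yields the existence of $H^u_{x,y}=\lim_{n\to+\infty}A^{-n}(y)^{-1}A^{-n}(x)$: writing $A^{-(n+1)}(x)=A(f^{-n-1}(x))^{-1}A^{-n}(x)$ one gets $H^{-(n+1)}_{x,y}-H^{-n}_{x,y}=A^{-n}(y)^{-1}[A(f^{-n-1}(y))A(f^{-n-1}(x))^{-1}-\mathrm{Id}]A^{-n}(x)$, which by H\"older continuity of $A$ and the dual lemma is $\le\mathrm{const}\cdot\tau^n\mathrm d(x,y)^\eta$ --- the analogue of estimate (1), giving a uniformly Cauchy sequence --- and the relations (a) and (b) for $H^u$ follow from the $n<0$ version of identity (2). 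Then, exactly as in Proposition 2.2, $H^{-n}_{B,x,y}=B^{-n}(y)^{-1}B^{-n}(x)$ is differentiable in $B$, its derivative being a finite sum over $1\le i\le n$ of terms $B^{-i}(y)^{-1}[\cdots]B^{-i}(x)$; here $B^{-n}(x)$ is a product of the \emph{inverse} factors $B(f^{-j}(x))^{-1}$ ($1\le j\le n$), whose derivatives carry a minus sign, while $B^{-n}(y)^{-1}$ is a product of non-inverse factors --- the reverse of the situation for $H^s$ --- which is what produces both the overall minus sign and the shift of the summation to start at $i=1$. The dual lemma reproduces the geometric bound (4) with $f^{-i}$ in place of $f^i$, so the formal series and the derivatives of the truncations converge uniformly on $\mathcal U$ and the latter converge to the former; hence $B\mapsto H^u_{B,x,y}$ is $C^1$ with the stated derivative.

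The only real work is clerical: one must check that, after differentiating the inversions and performing the single index shift, the resulting series matches the asserted expression term by term, with the factors $B^{-i}(y)^{-1}$, $B^{-i}(x)$, $H^u_{B,f^{-i}(x),f^{-i}(y)}$ in the right positions and the correct leading sign. Everything analytic --- uniform convergence, local uniformity in $B$, and $C^1$-dependence --- is immediate from the dual of Lemma 2.1 and the openness of the fiber bunching condition, precisely as for $H^s$.
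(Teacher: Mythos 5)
Your proof is correct and matches the paper's intent: the paper simply declares Proposition 2.3 to be ``the dual'' of Proposition 2.2 and gives no further argument, and your write-up carries out exactly that dualization (dual of Lemma 2.1 for $W^u_{\mathrm{loc}}$, Cauchy estimate for $H^{-n}_{x,y}$, then differentiation of the truncations). The extra care you take with the leading minus sign, the summation starting at $i=1$ (coming from $B^{-n}(x)$ being a product of inverse factors $B(f^{-j}(x))^{-1}$, $1\le j\le n$), and the $\theta$-indexing under $f^{-1}$ is precisely the bookkeeping that makes the stated formula come out, so you have supplied the details the paper omits.
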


\section{Perturbation Tools}
In this section, we begin to prove the Main Theorem, by perturbations along periodic orbits and homoclinic orbits, regarding to Avila and Viana simplicity criterion.\\

First, lets recall  Avila and Viana simplicity criterion. Consider the ergodic complete shift system $(f,\mu)$ where $\mu$ has product structure and let $A:N\rightarrow\mathrm{GL}(d,\mathbb C)$ be a linear cocycle over $f:n\rightarrow N$.

Suppose that $p$ is a periodic point of $f$, and $q$ a homoclinc point of $p$, i.e. $q\in W^u_{\mathrm{loc}}(p)$ and there is some multiple $m\geq 1$ of $\mathrm{per}(p)$ such that $f^m(q)\in W^s_{\mathrm{loc}}(p)$. We define the \textit{transition map}
$$\Psi_{A,p,q}:\mathbb C_p^d\rightarrow\mathbb C_p^d$$
by
$$\Psi_{A,p,q}=H^s_{f^m(q),p}A^m(q)H^u_{p,q}\in \mathrm{GL}(d,\mathbb C).$$

\begin{definition}
$A$ is pinching at $p$ if all eigenvalues of $A^{\mathrm{per}(p)}(p)$ have distinct absolute values. $A$ is twisting at $p,q$ if, for any pair of invariant subspaces $E_1,E_2$ of $A^{\mathrm{per}(p)}(p)$ with $\dim E_1+\dim E_2=d$,
$$\Psi_{A,p,q}(E_1)\cap E_2=\{\bf{0}\}.$$
A cocycle $A$ is simple if there exist some periodic point $p$ and some homoclinic point $q$ of $p$ such that $A$ is  pinching at $p$ and twisting at $p,q$.
\end{definition}

Then, Avila and Viana simplicity criterion is

\begin{theorem}
$\mathrm{[2]}$ If $A$ is simple then the Lyapunov spectrum of $A$ is simple.
\end{theorem}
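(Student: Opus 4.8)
Following Avila and Viana, I would argue by contradiction. Assuming the spectrum is not simple, the plan is: (a) produce, on a suitable Grassmannian bundle, an invariant measure whose fibre conditionals are invariant under both the stable and the unstable holonomies (an $su$-state); (b) use pinching at $p$ to pin the conditional over $p$ to a convex combination of Dirac masses at sums of eigendirections of $A^{\mathrm{per}(p)}(p)$; (c) transport that conditional once around the homoclinic loop through $q$ and $f^m(q)$, deduce that the transition map $\Psi_{A,p,q}$ preserves it, and contradict twisting by a short linear-algebra computation. Step (a) is the one substantial point --- an invariance principle in the tradition of Furstenberg and Ledrappier, and essentially the content of [2]; I expect (b) and (c) to be routine.

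For (a): write the exponents with multiplicity as $\mu_1\geq\cdots\geq\mu_d$; non-simplicity means $\mu_\ell=\mu_{\ell+1}$ for some $\ell\in\{1,\dots,d-1\}$. Let $\widehat A_\ell(x)$ be the transformation of the Grassmannian $\mathrm{Gr}_\ell(\mathbb C^d)$ induced by the action of $A(x)$ on $\ell$-planes; it admits stable and unstable holonomies $\widehat H^{s}_{\ell,x,y}$, $\widehat H^{u}_{\ell,x,y}$, namely the transformations induced by the linear holonomies $H^{s}_{x,y}$, $H^{u}_{x,y}$ of Proposition 2.1 (whose composition and cocycle identities pass to $\ell$-planes). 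The invariance principle of [2] then yields, from the equality $\mu_\ell=\mu_{\ell+1}$, a probability $\nu$ on $N\times\mathrm{Gr}_\ell(\mathbb C^d)$ which is $\widehat A_\ell$-invariant, projects to $\mu$, and has a continuously varying disintegration $\{\nu_x\}$ with
$$(\widehat H^{s}_{\ell,x,y})_*\nu_x=\nu_y\ \ (y\in W^{s}_{\mathrm{loc}}(x)),\qquad (\widehat H^{u}_{\ell,x,y})_*\nu_x=\nu_y\ \ (y\in W^{u}_{\mathrm{loc}}(x)).$$
One first constructs an invariant $u$-state by averaging holonomy-invariant fibre measures over the bundle, using the product structure of $\mu$; the hypothesis $\mu_\ell=\mu_{\ell+1}$ is exactly what promotes this $u$-state to an $s$-state as well, with continuous disintegration. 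Because $\mu$ has full support (it is positive on cylinders), the two displayed relations and $(\widehat A_\ell(x))_*\nu_x=\nu_{f(x)}$ then hold at every point of $N$, in particular at $p$ and $q$. Establishing this invariance principle is the hard part.

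For (b): put $n_0=\mathrm{per}(p)$, $B=A^{n_0}(p)$, and let $\widehat B$ be the transformation of $\mathrm{Gr}_\ell(\mathbb C^d)$ induced by $B$; then $\nu_p$ is $\widehat B$-invariant. Pinching says $B$ has $d$ eigenvalues of pairwise distinct modulus; let $v_1,\dots,v_d$ be corresponding eigenvectors. Then $\widehat B$ is gradient-like on the Grassmannian: the forward and the backward $\widehat B$-orbit of every $\ell$-plane converges to one of the finitely many fixed points $V_I:=\bigoplus_{i\in I}\mathbb C v_i$ ($I\subset\{1,\dots,d\}$, $|I|=\ell$), so the non-wandering set of $\widehat B$ is $\{V_I\}$ and every $\widehat B$-invariant probability is supported on it. Hence $\nu_p=\sum_{|I|=\ell}c_I\,\delta_{V_I}$ with $c_I\geq 0$, $\sum_I c_I=1$, and $c_I>0$ for at least one $I$.

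For (c): with $q\in W^{u}_{\mathrm{loc}}(p)$ and $f^m(q)\in W^{s}_{\mathrm{loc}}(p)$ for the relevant multiple $m$ of $n_0$, chaining the three invariances of $\nu$ --- unstable holonomy from $p$ to $q$, then the action of $A^m(q)$ on $\ell$-planes from $q$ to $f^m(q)$, then stable holonomy from $f^m(q)$ back to $p$ --- carries $\nu_p$ to $\nu_q$, then to $\nu_{f^m(q)}$, then back to $\nu_p$. Since $H^{s}_{f^m(q),p}A^m(q)H^{u}_{p,q}=\Psi_{A,p,q}$, this means the transformation $\widehat\Psi$ of $\mathrm{Gr}_\ell(\mathbb C^d)$ induced by $\Psi:=\Psi_{A,p,q}$ preserves $\nu_p$; hence $\Psi$ maps the finite set $\{V_I:c_I>0\}$ onto itself. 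Choose $I$ with $c_I>0$, so $\Psi(V_I)=V_J$ for some $\ell$-subset $J$; pick $j_0\in J$ and enlarge $\{j_0\}$ to a $(d-\ell)$-element subset $K\subset\{1,\dots,d\}$, which is possible since $1\leq\ell\leq d-1$. Then $E_1:=V_I$ and $E_2:=V_K$ are $B$-invariant subspaces with $\dim E_1+\dim E_2=d$, yet $\Psi(E_1)\cap E_2=V_J\cap V_K\supseteq\mathbb C v_{j_0}\neq\{\mathbf{0}\}$, contradicting the twisting hypothesis at $p,q$. Therefore the Lyapunov spectrum of $A$ is simple.
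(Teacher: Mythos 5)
The paper does not prove Theorem~3.1: the tag ``[2]'' in the statement signals that it is quoted from Avila--Viana and used as a black box, and the rest of the paper is devoted to showing that the pinching-and-twisting hypothesis is generic. So there is no in-paper proof to compare your sketch against.

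Judging your sketch on its own terms: steps (b) and (c) are correct and capture the linear-algebraic core of the Avila--Viana argument. Step (a), however, is not merely ``the hard part that [2] supplies'' --- as written it does not follow from the invariance principle. That principle promotes a $u$-state to an $s$-state only when \emph{all} the fibered Lyapunov exponents of the Grassmannian cocycle along the $u$-state vanish; at an $\ell$-plane near the fast Oseledets $\ell$-plane these fibered exponents are the differences $\mu_j-\mu_i$ with $i\le\ell<j$, and $\mu_\ell=\mu_{\ell+1}$ only kills the extremal one of them. Restricting to the sub-Grassmannian of $\ell$-planes pinched between the fast and slow parts of the degenerate Oseledets block would make the relevant exponents vanish, but that sub-bundle is defined through the merely measurable Oseledets splitting and does not visibly carry the $s$- and $u$-holonomies of $A$. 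Beyond this gap, the contradiction-via-$su$-state route is not the argument of [2], which is constructive: Avila--Viana show that $u$-states on $\mathrm{Gr}_\ell$ exist, that pinching and twisting force their conditionals to be Dirac, dually for $s$-states, and that the resulting measurable fast $\ell$-plane and slow $(d-\ell)$-plane sections are transverse; it is this transversality that yields $\mu_\ell>\mu_{\ell+1}$, with no recourse to an $su$-state. The pinching-gives-atoms and homoclinic-loop-plus-twisting manipulations in your (b) and (c) are the same ingredients that enter their Dirac-conditional lemma, so your finale is sound --- only the bridge to it in (a) is missing.
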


\subsection{Perturbation along periodic orbits}
As we mentioned before, the tangent space at any H\"older continuous cocycle $A$ is identified naturally with the space of all  H\"older continuous maps on $N$ into the space of linear maps on $\mathbb C^d$. Indeed, we may give any tangent vector $\dot A$ as a H\"older continuous map which assigns to every point of $N$ a linear map on $\mathbb C^d$.

\begin{proposition}
Let $p$ be a periodic point of $f$ then the application
$$A\mapsto A^{\mathrm{per}(p)}(p)\in \mathrm{GL}(d,\mathbb C)$$
is a submersion at any H\'older continuous cocycle $A$, even restricted to tangent vectors supported in some neighborhood of $p$.
\end{proposition}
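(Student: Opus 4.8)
The plan is to compute the derivative of the map $A \mapsto A^{\mathrm{per}(p)}(p)$ explicitly and exhibit, for each target direction, a Hölder tangent vector supported near $p$ that realizes it. Write $n = \mathrm{per}(p)$ and $p_j = f^j(p)$ for $0 \le j < n$, so that $p_0, p_1, \dots, p_{n-1}$ are the $n$ distinct points of the orbit of $p$ (distinct because $p$ has period exactly $n$), and $A^n(p) = A(p_{n-1}) \cdots A(p_1) A(p_0)$. For a tangent vector $\dot A$ one gets, by the Leibniz rule,
\begin{eqnarray}
\partial_A\big(A^n(p)\big)(\dot A) = \sum_{j=0}^{n-1} A(p_{n-1})\cdots A(p_{j+1})\,\dot A(p_j)\,A(p_{j-1})\cdots A(p_0),
\end{eqnarray}
with the usual convention that empty products are the identity. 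Each summand is $L_j\,\dot A(p_j)\,R_j$ where $L_j = A(p_{n-1})\cdots A(p_{j+1})$ and $R_j = A(p_{j-1})\cdots A(p_0)$ are fixed invertible matrices; note $L_j R_j$ differs from $A^n(p)$ only by conjugation, and in particular $L_j, R_j \in \mathrm{GL}(d,\mathbb{C})$.

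Next I would show surjectivity onto the tangent space $\mathrm{gl}(d,\mathbb{C})$ of $\mathrm{GL}(d,\mathbb{C})$ at $A^n(p)$ using just one of the summands, say $j = 0$: given any target matrix $X \in \mathrm{gl}(d,\mathbb{C})$, set $\dot A(p_0) = L_0^{-1} X R_0^{-1}$, which is a well-defined element of $\mathrm{gl}(d,\mathbb{C})$ since $L_0, R_0$ are invertible, and this single summand already contributes exactly $X$. The remaining issue is that we must prescribe $\dot A$ at the single point $p_0$ while keeping it Hölder continuous, supported in a prescribed neighborhood $U$ of $p$, and vanishing at the other orbit points $p_1, \dots, p_{n-1}$ (so that those summands do not interfere). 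For this I would pick a Hölder cutoff bump: since the $p_j$ are distinct, choose $U$ small enough that $U$ contains $p_0$ but none of $p_1, \dots, p_{n-1}$ (more carefully, small enough that the connected/cylinder neighborhoods one uses are pairwise disjoint along the orbit), and take $\varphi : N \to [0,1]$ Hölder continuous with $\varphi(p_0) = 1$ and $\varphi$ supported in $U$; then $\dot A := \varphi \cdot (L_0^{-1} X R_0^{-1})$ (the constant matrix times the scalar bump) is Hölder continuous, supported in $U$, satisfies $\dot A(p_0) = L_0^{-1} X R_0^{-1}$ and $\dot A(p_j) = 0$ for $j \ge 1$, hence $\partial_A(A^n(p))(\dot A) = X$.

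The main obstacle is the construction of the Hölder cutoff function $\varphi$ on the shift space $N$ that separates $p_0$ from the other orbit points: one must check that the metric $\mathrm d$ on $N$ admits Hölder (indeed locally constant, hence trivially Hölder) functions that are $1$ at a point and $0$ outside a small cylinder. On a shift space this is easy — indicator functions of cylinders are locally constant, so any finite power of such an indicator, or a sum of them, is automatically Lipschitz and a fortiori $\eta$-Hölder for the cylinder metric — but it should be stated carefully, and one should note that the neighborhood $U$ can be taken to be a cylinder small enough to miss $p_1, \dots, p_{n-1}$ because a periodic point of period $n$ has $n$ genuinely distinct coordinates-patterns along its orbit. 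With $\varphi$ in hand, surjectivity of the derivative is immediate from the displayed formula, which is exactly the submersion property, and the restriction to tangent vectors supported near $p$ is built into the construction. This completes the proof of Proposition 3.1.
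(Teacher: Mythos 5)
Your proposal is correct and takes essentially the same route as the paper: compute the Leibniz-rule derivative of $A\mapsto A^{\mathrm{per}(p)}(p)$, kill all but one summand by choosing the support of $\dot A$ to miss every other orbit point of $p$, and exhibit an explicit preimage using a H\"older bump supported in a small cylinder. The only (cosmetic) difference is that the paper's explicit tangent vector is $\dot{\mathcal A}(x)=\mathcal A\,A(p)^{-1}\alpha(x)A(x)$, a bump times a matrix field, whereas you take a bump times the constant matrix $L_0^{-1}XR_0^{-1}$; both evaluate to the same thing at $p$ and vanish on the rest of the orbit, so both work equally well.
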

\begin{proof}
Assume that $p$ is a fixed point of $f$. It is easy to see that
$$\partial_AA(p)(\dot A)=\dot A(p).$$

Fix a neighborhood $U_p$ of $p$ such that $p$ is the unique point of its orbit in $U_p$. Let $\alpha:N\rightarrow[0,1]$ be a H\"older continuous function vanishing outside $U_p$, and $\alpha(p)=1$. For any $\mathcal A\in\mathrm{GL}(d,\mathbb C)$, define $\dot{\mathcal A}$ in the tangent bundle as
$$\dot{\mathcal A}(x)=\mathcal AA(p)^{-1}\alpha(x)A(x).$$

Note that $\dot{\mathcal A}$ is supported on $U_p$, and $\dot{\mathcal A}(p)=\mathcal A$. Hence $\partial_AA(p)(\dot{\mathcal A})=\mathcal A$, as we have claimed. It is similar when per$(p)>1$ where in this case
$$\partial_AA^{\mathrm{per}(p)}(p)(\dot A)=A(f^{\mathrm{per}(p)-1}(p))~...~\dot A(p)+~...~+\dot A(f^{\mathrm{per}(p)-1}(p))~...~A(p)$$
which, for tangent vectors supported on $U_p$, reduces to
$$A(f^{\mathrm{per}(p)-1}(p))~...~\dot A(p).$$
The proof of Proposition 3.1 is now completed.
\end{proof}

\subsection{Perturbation along homoclinic orbits}
Assume that $p$ is a periodic point of $f$ and $q$ some homoclinic point of $p$.
The derivative of $\Psi_{B,p,q}=H^s_{f^m(q),p}.B^m(q).H^u_{p,q}$ at a vector $\dot B$ is given by
\begin{eqnarray}
\begin{array}{l} \partial_BH^s_{f^m(q),p}(\dot B).B^m(q).H^u_{p,q}+\\
H^s_{f^m(q),p}\partial_BB^m(q)(\dot B)H^u_{p,q}+\\
H^s_{f^m(q),p}B^m(q)\partial_BH^u_{p,q}(\dot B)
\end{array}
\end{eqnarray}
where
$$\partial_BB^m(q)(\dot B)=B(f^{m-1}(q))~...~\dot B(q)+~...~+\dot B(f^{m-1}(q))~...~B(q),$$
by definition.

\begin{proposition}
The application
$$\mathcal U\ni B\mapsto \Psi_{B,p,q}$$
is a submersion, even restricted to tangent vectors $\dot B$ supported on a neighborhood of $q$, for any periodic point $p$ and each homoclinic pint $q$ of $p$.
\end{proposition}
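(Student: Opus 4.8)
The plan is to exploit the three-term decomposition (6) of $\partial_B\Psi_{B,p,q}(\dot B)$ together with the fact that, for tangent vectors $\dot B$ supported in a small neighborhood $U_q$ of $q$ chosen to meet the orbit of $q$ only at $q$ itself, the first and third terms in (6) simplify dramatically. Indeed, $H^u_{p,q}$ and $H^s_{f^m(q),p}$ are limits along the forward/backward orbits of $q$ and of $f^m(q)$; if $U_q$ is small enough to be disjoint from $\{f^j(q):j\neq 0\}$, then by the explicit series for $\partial_BH^u$ (Proposition 2.3) and $\partial_BH^s$ (Proposition 2.2) only the $i=0$ summand survives in each, since $\dot B$ vanishes at every other iterate appearing in those series. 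So I would first record that, for such $\dot B$, the derivative (6) reduces to the single clean term $H^s_{f^m(q),p}\,[\,B(f^{m-1}(q))\cdots B(f(q))\,\dot B(q)\,H^u_{p,q}\,]$ plus the residual $i=0$ contributions from the holonomy derivatives — and in fact, by choosing $U_q$ to also exclude $q=f^0(q)$ from influencing the $s$-holonomy (recall $f^m(q)\in W^s_{\mathrm{loc}}(p)$ with $m\geq 1$, so $q$ itself is not among the points $f^i(f^m(q))$, $i\geq 0$) one arranges that $\partial_BH^s_{f^m(q),p}(\dot B)=0$ and $\partial_BH^u_{p,q}(\dot B)$ is only the $i=0$ term $-B^{-1}(q)^{-1}\dot B(f^{-1}(q))\cdots$ — no, more carefully: since $q\in W^u_{\mathrm{loc}}(p)$, the $u$-holonomy series runs over $f^{-i}(q)$, $i\geq 1$, none of which equals $q$, so $\partial_BH^u_{p,q}(\dot B)=0$ too. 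Thus for $\dot B$ supported near $q$, (6) collapses exactly to
$$\partial_B\Psi_{B,p,q}(\dot B)=H^s_{f^m(q),p}\,B(f^{m-1}(q))\cdots B(f(q))\,\dot B(q)\,H^u_{p,q}.$$

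Second, with this formula in hand surjectivity is immediate: given any target $\mathcal C\in\mathrm{Mat}(d,\mathbb C)$ (the tangent space to $\mathrm{GL}(d,\mathbb C)$ at $\Psi_{B,p,q}$), the linear maps $H^s_{f^m(q),p}$, $B(f^{m-1}(q))\cdots B(f(q))=B^{m-1}(f(q))$ and $H^u_{p,q}$ are all invertible (holonomies are invertible by Proposition 2.1(a), and products of $B(\cdot)\in\mathrm{GL}(d,\mathbb C)$ are invertible), so one simply sets
$$\dot B(q)=\bigl(B^{m-1}(f(q))\bigr)^{-1}\,\bigl(H^s_{f^m(q),p}\bigr)^{-1}\,\mathcal C\,\bigl(H^u_{p,q}\bigr)^{-1},$$
extends $\dot B$ to a H\"older map supported in $U_q$ by multiplying a bump function $\beta$ with $\beta(q)=1$ (as in the proof of Proposition 3.1, e.g. $\dot B(x)=\beta(x)\dot B(q)\,B(q)^{-1}B(x)$ or just $\beta(x)\cdot$ the constant value followed by a fixed linear correction, whichever keeps it H\"older), and observes $\partial_B\Psi_{B,p,q}(\dot B)=\mathcal C$. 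Hence the differential is onto and $B\mapsto\Psi_{B,p,q}$ is a submersion, even restricted to the subspace of tangent vectors supported in $U_q$.

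The main obstacle — and the only point requiring care — is the bookkeeping in the first paragraph: one must verify that the neighborhood $U_q$ can genuinely be chosen so that none of the infinitely many iterates $f^i(x)$, $f^i(y)$ appearing in the holonomy-derivative series (for $x,y$ the relevant pairs $(f^m(q),p)$ and $(p,q)$) fall into $U_q$, except possibly the single relevant one, and that the surviving term is exactly the middle term of (6) with no leftover. This uses that $p$ is periodic (finite orbit, and $q\notin\mathcal O(p)$ since $q$ is homoclinic, not on the periodic orbit, unless $q=p$ which we exclude) and that the forward orbit of $q$ is eventually in $W^s_{\mathrm{loc}}(p)$ while its backward orbit is in $W^u_{\mathrm{loc}}(p)$, so the orbit of $q$ accumulates only on $\mathcal O(p)$; hence $q$ is isolated in its own orbit and a small $U_q$ works. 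Once this is pinned down, the computation of the reduced derivative and the invertibility argument are routine, exactly parallel to Proposition 3.1. I would also remark that H\"older regularity of the constructed $\dot B$ is inherited from that of $\beta$ and $B$, so $\dot B$ is a legitimate tangent vector in $\mathcal U$'s ambient Banach space, completing the proof.
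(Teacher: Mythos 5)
Your proof is correct and takes essentially the same approach as the paper: choose $U_q$ disjoint from $\mathcal O(p)$ and from $\{f^j(q):j\neq 0\}$ so that, for $\dot B$ supported there, the two holonomy-derivative terms in (5) vanish and only the middle term $H^s_{f^m(q),p}\,\partial_B B^m(q)(\dot B)\,H^u_{p,q}$ survives with a single summand, then produce an explicit preimage of any target by conjugating with the invertible holonomies and matrix block and cutting off with a H\"older bump function at $q$. The only difference is cosmetic — the paper normalizes to $\mathrm{per}(p)=1$ and $m=1$ before computing, while you carry general $m$ and the extra invertible factor $B^{m-1}(f(q))$ explicitly, and you spell out (via the series of Propositions 2.2 and 2.3) why the holonomy-derivative terms vanish, a step the paper states without elaboration.
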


\begin{proof}
Without loose of generality, we assume that $p$ is a fixed point of $f$, and $m=1$. Let $U_q$ be any neighborhood of $q$ which is disjoint from the orbit of $p$ and $\{f^j(q):~j\neq 0\}$. So, the expression in (5) reduces to
$$H^s_{f(q),p}\partial_BB(q)(\dot B)H^u_{p,q}=H^s_{f(q),p}\dot B(q)H^u_{p,q}.$$

Thus, $\partial_B\Psi_{B,p,q}$ is given by
$$\dot B\mapsto H^s_{f(q),p}\dot B(q)H^u_{p,q},$$
for any vector $\dot B$ supported on $U_q$. We claim that
$$\Phi(\dot B)=H^s_{f(q),p}\dot B(q)H^u_{p,q}$$
is surjective.

Let $\beta:N\rightarrow[0,1]$ be a H\"older continuous function vanishing outside $U_q$, where $\beta(q)=1$. For any $\mathcal B\in\mathrm{GL}(d,\mathbb C)$, define $\dot{\mathcal B}$ as
$$\dot{\mathcal B}(w)=(H^s_{B,f(q),p})^{-1}\mathcal BB(q)^{-1}\beta(w)B(w)(H^u_{B,p,q})^{-1}.$$

Note that $\dot{\mathcal B}(q)={H^s_{B,f(q),p}}^{-1}\mathcal B{H^u_{B,p,q}}^{-1}$, and so $\Phi(\dot{\mathcal B})=\mathcal B$, as we have claimed.

The proof of Proposition 3.2 is now completed.
\end{proof}

\subsection{The main perturbation}
Now, we consider the main perturbation including both periodic and homoclinic orbits.

\begin{proposition}
If $A$ is fiber bunched then the application
$$\Theta:\mathcal U\rightarrow\mathrm{GL}(d,\mathbb C)^2$$
$$\Theta(B)=(B(p),\Psi_{B,p,q}),$$
$B\in\mathcal U$, is a submersion, even restricted to the subspace of tangent vectors $\dot B$ supported on some neighborhoods of $p,q$.
\end{proposition}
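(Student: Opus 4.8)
Here is how I would argue Proposition 3.3.

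The plan is to combine Propositions 3.1 and 3.2 through a block‑triangular argument, exploiting that a perturbation localized near $q$ leaves the value $B(p)$ untouched. As in the proofs of the two previous propositions, assume without loss of generality that $p$ is a fixed point and $m=1$. First I would fix disjoint neighbourhoods: a neighbourhood $U_p$ of $p$ containing no other point of its orbit, and a neighbourhood $U_q$ of $q$ that is disjoint from $U_p$, from the orbit of $p$, and from $\{f^j(q):j\neq 0\}$. I then restrict attention to tangent vectors of the form $\dot B=\dot B_p+\dot B_q$, with $\dot B_p$ supported in $U_p$ and $\dot B_q$ supported in $U_q$. Identifying the tangent space of $\mathrm{GL}(d,\mathbb C)^2$ with $M_d(\mathbb C)^2$, the goal is to show that $\dot B\mapsto\partial_B\Theta(\dot B)$ maps this subspace onto $M_d(\mathbb C)^2$ for every $B\in\mathcal U$.

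Next I would compute the two coordinates of $\partial_B\Theta(\dot B)$ separately. For the first coordinate, $\partial_B(B(p))(\dot B)=\dot B(p)=\dot B_p(p)$, since $\dot B_q$ vanishes near $p$; by Proposition 3.1 the assignment $\dot B_p\mapsto\dot B_p(p)$ already surjects onto $M_d(\mathbb C)$. For the second coordinate, linearity of the derivative gives $\partial_B\Psi_{B,p,q}(\dot B)=\partial_B\Psi_{B,p,q}(\dot B_p)+\partial_B\Psi_{B,p,q}(\dot B_q)$. Write $w(\dot B_p):=\partial_B\Psi_{B,p,q}(\dot B_p)$; by formula (5) together with the differentiability of the holonomies (Propositions 2.2 and 2.3), this is a well‑defined linear function of $\dot B_p$ valued in $M_d(\mathbb C)$. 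Note that, although $\dot B_p$ is supported away from $q$ (so it does not change $B^m(q)$), it does alter the holonomies $H^s_{f^m(q),p}$ and $H^u_{p,q}$ through the occurrences of $B(p)$ in their defining limits, so $w$ need not vanish. On the other hand, exactly as in the proof of Proposition 3.2, since $\dot B_q$ is supported in $U_q$ one has $\partial_B\Psi_{B,p,q}(\dot B_q)=H^s_{f^m(q),p}\,\dot B_q(q)\,H^u_{p,q}$, and the bump‑function construction there (conjugating the target by $(H^s_{f^m(q),p})^{-1}$ on the left and $(H^u_{p,q})^{-1}$ on the right, then spreading it over $U_q$) shows $\dot B_q\mapsto\partial_B\Psi_{B,p,q}(\dot B_q)$ surjects onto $M_d(\mathbb C)$.

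Finally, given a target $(\mathcal A,\mathcal B)\in M_d(\mathbb C)^2$, I would first invoke Proposition 3.1 to choose $\dot B_p$ supported in $U_p$ with $\dot B_p(p)=\mathcal A$; this pins down the first coordinate and produces a specific $w(\dot B_p)\in M_d(\mathbb C)$. Then I invoke Proposition 3.2 to choose $\dot B_q$ supported in $U_q$ with $\partial_B\Psi_{B,p,q}(\dot B_q)=\mathcal B-w(\dot B_p)$. Setting $\dot B=\dot B_p+\dot B_q$ yields $\partial_B\Theta(\dot B)=\bigl(\mathcal A,\;w(\dot B_p)+(\mathcal B-w(\dot B_p))\bigr)=(\mathcal A,\mathcal B)$, so $\partial_B\Theta$ is surjective and $\Theta$ is a submersion. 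The only real content beyond the two earlier propositions is this triangular structure — $q$‑supported perturbations are invisible to the first coordinate — which lets the coordinates be corrected in the order first‑then‑second without the second correction spoiling the first; this is the step I expect to require the most care to state cleanly. The general case $\mathrm{per}(p)>1$, $m>1$ goes through verbatim once $U_p$ is additionally taken disjoint from $\{q,f(q),\ldots,f^{m-1}(q)\}$, using the reduced derivative formulas recorded in the proofs of Propositions 3.1 and 3.2.
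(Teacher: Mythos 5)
Your argument is correct and follows essentially the same route as the paper: decompose a tangent vector supported near $p\cup\{\text{orbit of }q\}$ as $\dot B_p+\dot B_q$, observe that the $q$-localized part does not affect $B(p)$, and solve first for the $p$-coordinate (Proposition 3.1) and then correct the $\Psi$-coordinate (Proposition 3.2). Your explicit observation that $\partial_B\Psi_{B,p,q}(\dot B_p)$ need not vanish (because the holonomies depend on $B$ near $p$) is a point the paper handles implicitly by carrying that term through to the correction $\mathcal B_2-\partial_B\Psi_{B,p,q}(\dot{\mathcal B}_1)$; otherwise the two proofs coincide.
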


\begin{proof}
Take $U_p$ such that $U_p\cap\mathrm{orb}(p)=\{p\}$, $U_p\cap\mathrm{orb}(q)=\emptyset$, and similarly $U_q$ so that $U_q\cap\mathrm{orb}(q)=\{q\}$, $U_q\cap\mathrm{orb}(p)=\emptyset$.

First note that, if $\dot B$ is a tangent vector supported on $U_p\cup U_q$, so, there exist two tangent vectors $\dot B_1$ supported on $U_p$, and $\dot B_2$ supported on $U_q$ such that $\dot B=\dot B_1+\dot B_2$. Indeed, we may assume that
\[\dot B_1(x)=\left\{\begin{array}{cl}
\dot B(x)&x\in U_P\\\textbf{0}&x\notin U_P
\end{array}\right.\]\\
and
\[\dot B_2(x)=\left\{\begin{array}{cl}
\dot B(x)&x\in U_q\\\textbf{0}&x\notin U_q.
\end{array}\right.\]\\
So
$$\partial_B\Theta(B)(\dot B)=\partial_B\Theta(B)(\dot B_1)+\partial_B\Theta(B)(\dot B_2)$$
which is equal to
$$(\partial_BB(p)(\dot B_1),\partial_B\Psi_{B,p,q}(\dot B_1))+(\partial_BB(\dot B_2),\partial_B\Psi_{B,p,q}(\dot B_2))=$$
$$(\dot B_1(p),\partial_B\Psi_{B,p,q}(\dot B_1))+(\textbf{0},\partial_B\Psi_{B,p,q}(\dot B_2))$$

By Proposition 3.1 and Proposition 3.2, for any $(\mathcal B_1,\mathcal B_2)\in\mathrm{GL}(d,\mathbb C)^2$, there exist tangent vectors $\dot{\mathcal B_1}$ supported on $U_p$, and then $\dot{\mathcal B_2}$ supported on $U_q$ such that
$$\partial_B\Psi_{B,p,q}(\dot{\mathcal B_2})=\mathcal B_2-\partial_B\Psi_{B,p,q}(\dot{\mathcal B}_1),$$
and therefore
$$\partial_B\Theta(B)(\dot{\mathcal B})=(\mathcal B_1,\mathcal B_2)$$
where $\dot{\mathcal B}=\dot{\mathcal B_1}+\dot{\mathcal B_2}$ is supported on $U_p\cup U_q$.

The proof of Proposition 3.3 is now completed.
\end{proof}

\section{Generic Simplicity}
In this section, we complete the proofs of Main Theorem and, Corollary 1 and Corollary 2.
 
\subsection{Pinching} Let $X$ be the subset of matrices $A\in \mathrm{GL}(d,\mathbb C)$ whose eigenvalues are not all distinct in norm. $X$ is closed and contained in a finite union of closed submanifolds of $\mathrm{GL}(d,\mathbb C)$ with codimention $\geq 1$.

Proposition 3.1 follows that the subset of cocycles $B\in\mathcal U$ for which $B^{\mathrm{per}(p)}(p)\in X$ is closed and contained in a finite union of closed submanifolds with codimention $\geq 1$.\\

For any $l\geq 1$, consider periodic points $p_1,...,p_l$. As a corollary of Proposition 3.1, the application
$$A\mapsto (A^{\mathrm{per}(p_1)}(p_1),...,A^{\mathrm{per}(p_l)}(p_l))\in \mathrm{GL}(d,\mathbb C)^l$$
is a submersion at any H\"older continuous cocycle $A$, even restricted to tangent vectors supported in some neighborhoods of $p_1,...,p_l$.

We imply that the subset of linear cocycles $A\in\mathcal U$ where $A^{\mathrm{per}(p_i)}(p_i)\in X$ is closed and contained in a finite union of closed submanifolds with codimention $\geq l$.

\subsection{Twisting}
The subset $Y$ of all pairs of matrices $(A,B)$ such that there exist $B$-invariant subspaces $E_1,E_2$ with $\dim E_1+\dim E_2=d$ where $A(E_1)\cap E_2\neq\{\textbf{0}\}$, is closed and contained in a finite union of closed submanifolds of positive codimention. Indeed, Fixing $E_1,E_2$, the application
$$\mathrm{GL}(d,\mathbb C)\ni A\mapsto A(E_1)\in\mathrm{Grass}(\dim E_1,d)$$
is a submersion. In the other hand,
$$\{A:~A(E_1)~\mathrm{do~not~intersect~transversally}~E_2 \}$$
is a submanifold with codimension$\geq 1$, since
$$\{E\in\mathrm{Grass}(\dim E_1,d):~E~\mathrm{do~not~intersect~transversally}~E_2 \}$$
is a submanifold of positive codimension. Now, for any fixed matrix $B$, the set $Y$ is contained in a finite number of submanifolds of positive codimension. So, $Y$ is contained in a finite number of submanifolds of positive codimension in $\mathrm{GL}(\mathbb C,d)^2$.\\

Therefore, by Proposition 3.3 the subset of cocycles $B\in\mathcal U$ so that $$(B^{\mathrm{per}(p)}(p),\Psi_{B,p,q})\in Y$$
is closed and contained in a finite union of closed submanifolds with positive codimension.\\

Given $l\geq 1$, if $q_1,...,q_l$, respectively as some homoclinic points of periodic points $p_1,...,p_l$, respectively, then the subset of cocycles $B\in\mathcal U$ for which
$$(B^{\mathrm{per}(p_i)}(p_i),\Psi_{B,p_i,q_i})\in Y$$
is closed and contained in a finite union of closed submanifolds with codimension $\geq l$. Since
the application
$$\hat\Theta:\mathcal U\rightarrow\mathrm{GL}(d,\mathbb C)^{2l}$$
$$\hat\Theta(B)=(B(p_1),...,B(p_l),\Psi_{B,p_1,q_1},...,\Psi_{B,p_l,q_l}),$$
is a submersion, even restricted to the subspace of tangent vectors $\dot B$ supported on some neighborhoods of $p_1,...,p_l,q_1,...,q_l$.

\subsection{Real valued cocycles}
All results in [2] and perturbation arguments of this section are valid for cocycles with values in $\mathrm{GL}(d,\mathbb R)$. But, in this case there is the possibility of existence of pairs of complex conjugate eigenvalues. Indeed, the subset of matrices whose eigenvalues are not all distinct in norm has non-empty interior in $\mathrm{GL}(d,\mathbb R)$.

The way to bypass this, is treated in [5] and [6]:\\
Excluding a codimension 1 subset of cocycles, one may assume that\\
(i) all the eigenvalues of $B^{\mathrm{per}(p)}(p)$ are real and have distinct norms, except for $c\geq 0$ pairs of complex conjugate eigenvalues,\\
(ii) $\Psi_{A,p,q}(E_1)\cap E_2=\{\bf{0}\}$, for any direct sums $E_1$ and $E_2$ of eigenspaces of $B^{\mathrm{per}(p)}(p)$ with $\dim E_1+\dim E_2\leq d$.

Avoiding another subset of positive codimention, we can choose a new periodic point $\hat p$ so that all the eigenvalues of $B^{\mathrm{per}(\hat p)}(\hat p)$ are real and distinct.

Now, in this way, for any $l\geq 1$, avoiding a codimension $l$ subset of cocycles, one may suppose that periodic points $\hat p_1,...,\hat p_l$ are defined.

The proof of Main Theorem is now completed.

\subsection{More general shifts}
Now, we recall ideas of Avila and Viana [2] to explain how cocycles over more general Markov maps can be reduced to the case of the full countable shift map.\\

We begin by subshifts of countable type. Let $\mathcal N$ be a finite or countable set and $T=(t_{ij})_{i,j\in\mathcal N}$  be the transition matrix meaning that every $t_{ij}$is either 0 ir 1. Define
$$N_T=\{(t_n)_{n\in\mathbb Z}\in\mathcal N^{\mathbb Z}:~(t_{{t_n},{t_{n+1}}})=1,~n\in\mathbb Z\},$$
and let $f_T:N_T\rightarrow N_T$ be the restriction to $N_T$ of the shift map on $\mathcal N^{\mathbb Z}$. By definition the cylinders $[.]$ of $\hat\Sigma$ are its intersections with the cylinders of the space $\mathcal N^{\mathcal Z}$. One-sided shift spaces $N_T^u$ and $N_T^s$, and cylinders $[.]^u$ and $[.]^s$ are defined analogously.\\

Let $\mu_T$ be a probability measure on $N_T$ invariant under $f_T$ and whose support contains some cylinder $[I]=[\tau_1,...,\tau_k-1]$ of $N_T$. The subset $N$ of points that return to $[I]$ infinitely many times in forward and backward time has full measure, by Poincar\'e recurrence. Let $r(x)\geq1$ be the first return time and 
$$f(x)=f_T^{r(x)}(x),~x\in N.$$

This return map $f:N\rightarrow N$ may be seen as a shift on $\mathbb N^{\mathbb Z}$: let $\{J(l):~l\in\mathbb N\}$ be an enumeration of the family of Cylinders of the form 

\begin{eqnarray}
[\tau_0;~\tau_1,...,\tau_r,...,\tau_{r+k=1}]
\end{eqnarray}

with $\tau_{r+i}=\tau_i$ for $i=0,1,...,k-1$ and $r\geq1$ minimum with this property, then
$$\mathbb N^{\mathbb Z}\rightarrow N,~(l_n)_{n\in\mathbb Z}\mapsto\bigcap_{n\in\mathbb Z}f^{-n}(J(l_n))$$
conjugates $f$ to the shift map. Also, if $\mu$ is the normalized restriction of $\mu_T$ to $N$ then $\mu$ is a probability measure invariant by $f$ and it is ergodic for $f$ if $\mu_T$ is ergodic for $f_T$. The measure $\mu$ is positive on cylinders since $[I]$ is contained in the support of $\mu_T$. It has product structure if $\mu_T$ has: any cylinder of $N_T$ is homeomorphic to a product of cylinders of $N_T^u$ and $N_T^s$.\\

Now, to each cocycle $A_T:N_T\rightarrow\mathrm{GL}(d,\mathbb C)$ defined over $f_T:N_T\rightarrow N_T$ we associate a linear cocycle $A:N\rightarrow\mathrm{GL}(d,\mathbb C)$ defined over $f:N\rightarrow N$ by
$$A(x)=A_T^{r(x)}(x).$$

\begin{proposition}
The Lyapunov exponents of $A$ have multiplicity 1 if and only if the Lyapunov exponents of $A_T$ have multiplicity 1.
\end{proposition}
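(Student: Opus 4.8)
The plan is to show that the full Oseledets apparatus of $A$ over $(f,\mu)$ is obtained from that of $A_T$ over $(f_T,\mu_T)$ by a single \emph{positive} rescaling of the exponents, the Oseledets subspaces being left unchanged; since such a rescaling cannot merge or split subspaces, multiplicity $1$ transfers in both directions. This is the cocycle version of the Abramov/Kac principle relating a system to its first–return map.

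First I would set up the return–time bookkeeping. For $x\in N$ and $n\ge 1$ put $r_n(x)=\sum_{j=0}^{n-1}r(f^j(x))$ and $r_{-n}(x)=\sum_{j=1}^{n}r(f^{-j}(x))$, so that $f^{\pm n}(x)=f_T^{\pm r_{\pm n}(x)}(x)$. From $A(x)=A_T^{r(x)}(x)$ and the cocycle identity one gets immediately
$$A^n(x)=A_T^{r_n(x)}(x),\qquad A^{-n}(x)=A_T^{-r_{-n}(x)}(x)\qquad(n\ge 1,\ x\in N).$$
Because the support of $\mu_T$ contains $[I]$, Kac's formula together with ergodicity of $\mu_T$ shows that $r$ is $\mu$-integrable, with $c:=\int r\,d\mu=1/\mu_T([I])\in[1,+\infty)$; and since $\mu$ is ergodic for both $f$ and $f^{-1}$, Birkhoff's theorem gives $r_n(x)/n\to c$ and $r_{-n}(x)/n\to c$ for $\mu$-a.e.\ $x$, with $r_{\pm n}(x)\to+\infty$ (as $r\ge 1$).

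Next I would transport the Oseledets splitting pointwise. Since $\mu$ is a normalized restriction of $\mu_T$ we have $\mu\ll\mu_T$, so every full-$\mu_T$-measure event is full-$\mu$-measure; fix $x\in N$ at which Oseledets' theorem holds for $(f_T,A_T,\mu_T)$ and at which the two Birkhoff limits above hold, and let $E_1(x)\oplus\cdots\oplus E_k(x)$ be the Oseledets splitting of $A_T$ at $x$ with exponents $\lambda_1>\cdots>\lambda_k$. For $v\in E_i(x)\setminus\{\mathbf 0\}$ I would write
$$\frac1n\log\bigl\|A^n(x)v\bigr\|=\frac{r_n(x)}{n}\cdot\frac1{r_n(x)}\log\bigl\|A_T^{r_n(x)}(x)v\bigr\|\ \longrightarrow\ c\,\lambda_i\qquad(n\to+\infty),$$
where the second factor converges because $m\mapsto\frac1m\log\|A_T^m(x)v\|$ converges to $\lambda_i$ and $m=r_n(x)\to\infty$; the analogous computation with $A^{-n}$ and $r_{-n}$ gives the limit $c\,\lambda_i$ as $n\to-\infty$. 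Hence the very same measurable splitting $E_1(x)\oplus\cdots\oplus E_k(x)$ is an Oseledets splitting of $A$ at $x$, now with exponents $c\lambda_1>\cdots>c\lambda_k$ and with unchanged multiplicities $\dim E_i(x)$. (This exhibits the Oseledets decomposition of $A$ directly, so no separate integrability check for $A$ is strictly needed; if one prefers to quote Oseledets for $A$ as a black box, its integrability hypothesis follows from $\|A^{\pm1}(x)\|\le(\sup_{[I]}\|A_T^{\pm1}\|)^{r(x)}$ and $r\in L^1(\mu)$ when $A_T$ is bounded on $[I]$.)

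Finally, since $c>0$, the numbers $c\lambda_i$ are pairwise distinct with $\dim E_i(x)=1$ for all $i$ precisely when the $\lambda_i$ are; and as both $\mu$ and $\mu_T$ are ergodic, ``multiplicity $1$'' is exactly the statement that every Oseledets subspace is one-dimensional a.e. Thus the identification above yields: $A$ has all Lyapunov exponents of multiplicity $1$ if and only if $A_T$ does. The one delicate point — and the step I would check most carefully — is the interchange of the $f$-time index $n$ with the $f_T$-time index $r_n(x)$: it is legitimate exactly because $r_n(x)/n$ tends to a finite, strictly positive constant while $r_n(x)\to\infty$, so that the Lyapunov limit evaluated along the sparse sequence $\bigl(r_n(x)\bigr)_n$ still exists and equals $\lambda_i$. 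Everything else is bookkeeping.
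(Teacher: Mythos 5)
Your proposal is correct and follows essentially the same route as the paper's proof: both exploit the identity $A^n(x)=A_T^{r_n(x)}(x)$ and the Birkhoff/Kac limit $r_n(x)/n\to 1/\mu_T([I])$ to conclude that each directional Lyapunov exponent of $A$ equals $c=1/\mu_T([I])$ times the corresponding exponent of $A_T$. You are more explicit than the paper about the backward-time limit and about the preservation of the Oseledets subspaces under this positive rescaling, but the core computation is the same as the one surrounding equation (7) in the text.
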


\begin{proof}
Given any non zero vector $v$,
\begin{eqnarray}
\lim_{|n|\rightarrow\infty}\frac{1}{n}\log||A(x)v||=\lim_{|n|\rightarrow\infty}\frac{1}{n}\log||A_T^{S^n_ r(x)}(x)v||
\end{eqnarray}
where $$S^n_r(x)=\sum_{j=0}^{n-1}r(f^j(x)).$$
In the other hand, for $\mu$ almost every $x$, (7) is equal to
$$\lim_{|n|\rightarrow\infty}S^n_r(x)\lim_{|m|\rightarrow\infty}\frac{1}{m}\log||A_T^m(x)v||=\frac{1}{\mu_T([I])}\lim_{|m|\rightarrow\infty}\frac{1}{m}\log||A_T^m(x)v||,$$
since $\frac{1}{n}S_r^n(x)$ converges to $\int r~d\mu=\frac{1}{\mu([I])}$. So, the Lyapunov exponents of either cocycle have multiplicity 1 if and only if the neither have.

The proof is now completed.\\
\end{proof}

Notice that $A$ is H\"older continuous if $A_T$ is H\"older continuous, since the return time $r(x)$ is constant on each cylinder as in (6). So, we have the following, immediately.

\begin{proposition}
The map
\begin{eqnarray}
A_T\mapsto A\in\mathcal U
\end{eqnarray}
is a submersion.\\
\end{proposition}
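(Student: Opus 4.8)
The plan is to exhibit the map in (8) as a restriction of a continuous \emph{linear} map between Banach spaces, and then invoke the elementary fact that a bounded linear map which is onto (or admits a bounded linear right inverse) is a submersion in the sense used here. First I would set up the two ambient Banach spaces: the source is the space of $\eta$-H\"older cocycles $A_T:N_T\to\mathrm{GL}(d,\mathbb C)$, viewed as an open subset of the H\"older Banach space $C^{0,\eta}(N_T,\mathrm{Mat}(d,\mathbb C))$, and the target is the analogous open set $\mathcal U\subset C^{0,\eta}(N,\mathrm{Mat}(d,\mathbb C))$. Because the return time $r(x)$ is locally constant (it is constant on each cylinder $J(l)$ as displayed in (6)), the assignment $A_T\mapsto A$, $A(x)=A_T^{r(x)}(x)$, is \emph{polynomial} of degree $r(x)$ in the entries of $A_T$ on each cylinder; hence it is smooth ($C^\infty$, indeed analytic) as a map between these Banach spaces, with derivative at $A_T$ in the direction $\dot A_T$ given by the Leibniz-type sum
$$\partial_{A_T}A(x)(\dot A_T)=\sum_{j=0}^{r(x)-1}A_T^{\,r(x)-1-j}(f_T^{\,j+1}(x))\,\dot A_T(f_T^{\,j}(x))\,A_T^{\,j}(x).$$
Smoothness is routine; the real content is surjectivity of this derivative.

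The key step is therefore to show that $\partial_{A_T}A$ is onto (even onto after restricting $\dot A_T$ to be supported near a prescribed finite set of orbits, which is what is needed for the codimension statements elsewhere). Here I would use the same device as in Propositions 3.1--3.3: given a target perturbation $\dot A$ of $A$, I want to produce $\dot A_T$ realizing it. Fix a cylinder $J(l)$ of $N$ as in (6); on $J(l)$ the point $x$ has return time $r=r(l)$ and the orbit segment $x,f_T(x),\dots,f_T^{r-1}(x)$ consists of $r$ distinct points whose cylinders (of length one in $\mathcal N^{\mathbb Z}$) can be chosen pairwise disjoint. I would then pick $\dot A_T$ supported on the first-coordinate cylinder of the segment, i.e. concentrate the perturbation at the ``return step'' $j=0$; for such $\dot A_T$ the Leibniz sum collapses to the single term
$$\partial_{A_T}A(x)(\dot A_T)=A_T^{\,r-1}(f_T(x))\,\dot A_T(x),$$
and since $A_T^{\,r-1}(f_T(x))\in\mathrm{GL}(d,\mathbb C)$ is invertible, one recovers any prescribed $\dot A(x)$ by setting $\dot A_T(x)=\big(A_T^{\,r-1}(f_T(x))\big)^{-1}\dot A(x)$, extended H\"older-continuously over the cylinder by a bump function exactly as in the proof of Proposition 3.1 (the bump depending only on the first coordinate, so that $r$ and $A_T^{r-1}(f_T(\cdot))$ are genuinely constant on its support). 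Carrying this out simultaneously and independently over the at-most-countably-many cylinders $J(l)$, with a uniform H\"older bound coming from the uniform lower bound on $\mu_T([I])$ and the boundedness of $A_T^{-1}$ on a neighbourhood, produces a bounded right inverse to $\partial_{A_T}A$; this yields a splitting of the tangent space and hence the submersion property, and simultaneously shows the image is (locally) an open subset, so it does land in $\mathcal U$ after shrinking.

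The main obstacle I anticipate is the bookkeeping needed to make the right inverse \emph{bounded} in the H\"older norm, uniformly, when $\mathcal N$ and the collection of cylinders $\{J(l)\}$ are infinite: one must control $\|\dot A_T\|_{0,\eta}$ in terms of $\|\dot A\|_{0,\eta}$ across infinitely many return-blocks of unbounded length $r(l)$. The saving feature is that the blocks live in disjoint cylinders, so the global H\"older seminorm of $\dot A_T$ is governed either within a single cylinder (where it is controlled by $\|\dot A\|_{0,\eta}$ times the operator norm of the fixed invertible matrix and the H\"older norm of the chosen bump on that cylinder) or across distinct cylinders (where the distance between points is bounded below, so the difference quotient is bounded by the $C^0$ norm). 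Choosing the bump functions with H\"older seminorm controlled uniformly — e.g. scaled copies of a single model on cylinders of comparable diameter — closes this gap; everything else is the routine Banach-space calculus already used repeatedly in Section 3.
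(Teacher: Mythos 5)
The paper's own proof is a single sentence — ``the derivative of (8) is the identity map'' — which, taken literally, is false: $A(x)=A_T^{\,r(x)}(x)$ is a product of $r(x)$ factors, and its derivative is exactly the Leibniz sum you wrote down, not the identity. What the paper evidently has in mind (and glosses over) is the existence of a natural \emph{section} $\sigma:\mathcal U\to\{\text{cocycles over }f_T\}$, namely $\sigma(A)(y)=A(y)$ for $y\in[I]$ and $\sigma(A)(y)=\mathrm{id}$ otherwise; on the image of $\sigma$ all the factors $A_T^{\,r-1-j}$, $A_T^{\,j}$ in the Leibniz sum are identities, the derivative of the composition $A\mapsto\sigma(A)\mapsto A$ is literally the identity, and the right inverse $\dot A\mapsto(\dot A|_{[I]},\mathbf 0)$ is obviously bounded. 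This gives the submersion property for free, but only at base points lying on the image of $\sigma$, i.e.\ cocycles that are trivial off $[I]$. Your proof is the ``honest'' computation at an arbitrary $A_T$, and you correctly identify the right inverse: concentrate $\dot A_T$ on the $j=0$ coordinate so that the Leibniz sum collapses to $A_T^{\,r-1}(f_T(x))\,\dot A_T(x)$, and solve by $\dot A_T(x)=(A_T^{\,r-1}(f_T(x)))^{-1}\dot A(x)$. This is the derivative of the family of sections $\sigma_{A_T}$ through a general $A_T$, and it is the right object; your formula is the real content that the paper skips.

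There is, however, a genuine gap in the uniformity argument at the end. You need $\|\dot A_T\|_{0,\eta}\le C\|\dot A\|_{0,\eta}$ uniformly, but the right inverse multiplies by $(A_T^{\,r(l)-1}(f_T(\cdot)))^{-1}$, whose operator norm can grow without bound as $r(l)\to\infty$ (the return times $r(l)$ are unbounded over $l$, and fiber bunching does not control $\|A_T^{-n}\|$ alone). So the pointwise bound ``$\|\dot A\|_{0,\eta}$ times the operator norm of the fixed invertible matrix'' is not uniform in $l$. Your second claim — that across distinct cylinders the distance is bounded below, so H\"older differences reduce to $C^0$ bounds — is also not right: two points in distinct $J(l),J(l')$ can share an arbitrarily long common prefix (the cylinders $J(l)$ have unbounded depth), so $d_{N_T}$ between them can be arbitrarily small. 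Neither of these kills the argument for what the paper actually uses, because (as you note early on) the codimension statements only require lifting perturbations supported near a \emph{fixed finite} collection of periodic and homoclinic orbits; those orbits meet only finitely many cylinders $J(l)$, the relevant return times are bounded, and the right inverse restricted to that finite-dimensional-in-spirit family is plainly bounded. So the safest route is to state and prove exactly that restricted version (which is all Propositions 3.1--3.3 need), or else to replace the uniform-bound paragraph by the section observation above, which at the price of working only along the image of a section gives the bounded right inverse with no bookkeeping. As written, the ``uniform H\"older bound'' paragraph does not close.
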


The proof of the last proposition is done since the derivative of (8) is the identity map.

The proof of Corollary 1 is now completed.

\subsection{Markov maps}
More generally, let $g:M\rightarrow M$ be a transformation preserving a probability measure $\nu$ and assume there exists a return map $\hat f$ to some domain $D\subset\mathrm{supp}(\nu)$ which is a Markov map. This means that there exists a finite or countable partition $\{J(l):~l\in\mathbb N\}$ of $D$ such that (i) $g$ maps each $J(l)$ bijectively to the whole domain $D$ and (ii) for any sequence $(l_n)_n$ in $\mathbb N^{n\geq0}$,
$$\bigcap_{n\geq0}\hat f^{-n}(J(l_n))$$
consists of exactly one point. Then $\hat f$ may be seen as the shift map on $\mathbb N^{n\geq0}$.

The normalized restriction $\hat\mu$ of $\nu$ to the domain of $\hat f$ is a probability measure invariant by $\hat f$, and it is ergodic for $\hat f$ if $\nu$ is ergodic for $g$. As before, to any linear cocycle over $g$, one may associate a linear cocycle over $\hat f$, or its natural extension $f:N\rightarrow N$, for which the Lyapunov exponents of either have multiplicity 1 if an only if the Lyapunov exponents of the other have multiplicity 1. Similarly to Proposition 4.2 one may transfer the arbitrary codimension to the space of all fiber bunched linear cocycles over $g$.

This completes the proof of Corollary 2.\\\\
\textbf{Acknowledgments}. I would like to thanks M. Viana for all supports and advices, and J. Santamaria for useful conversations. This work is supported by a doctoral grant from CNPq - TWAS.

\addcontentsline{toc}{section}{References}

\textit{Mohammad Fanaee}

\textit{Instituto de Matem\' atica e Estat\'istica (IME)}

\textit{Universidade Federal Fluminense (UFF)}

\textit{Campus Valonguinho 24020-140}

\textit{Niter\'oi - RJ - Brazil}

\textit{Email: mf@id.uff.br}

\end{document}